\newcommand{\R}{\mathbb R}
\newcommand{\Z}{\mathbb Z}
\newcommand{\C}{\mathbb C}
\newcommand{\iz}{\mathbf{i}}
\newtheorem{theorem}{Theorem}%[section]
\newtheorem{lemma}{Lemma}%[section]
\begin{document}

\title[Extremal first Dirichlet eigenvalue ]{Extremal first Dirichlet eigenvalue of doubly connected plane domains and dihedral symmetry}

\author{ Ahmad El Soufi and Rola Kiwan }

\address{ Laboratoire de Math\'ematiques et Physique Th\'eorique,
UMR CNRS 6083, Universit\'e Fran\c{c}ois Rabelais de Tours, Parc de Grandmont, F-37200
Tours France}
\email{elsoufi@univ-tours.fr ; kiwan@lmpt.univ-tours.fr}

\keywords{eigenvalues, Dirichlet Laplacian, Schr\"odinger operator, extremal eigenvalue, obstacle, dihedral group}

\subjclass[2000]{35J10, 35P15, 49R50, 58J50 }

\begin{abstract}
We deal with the following eigenvalue optimization problem: 
Given a bounded domain $D\subset \R^2$, how to place an obstacle $B$ of fixed shape within $D$ so as to maximize or minimize the fundamental eigenvalue $\lambda_1$ of the Dirichlet Laplacian on $D\setminus B$. This means that we want to extremize the function $\rho\mapsto \lambda_1(D\setminus \rho (B))$, where $\rho$ runs over the set of rigid motions such that $\rho (B)\subset D$. 
We answer this problem in the case where both $D$ and $B$ are invariant under the action of a dihedral group $\mathbb{D}_n$, $n\ge2$, and where the distance from the origin to the boundary is monotonous as a function of the argument between two axes of symmetry. The extremal configurations correspond to the cases where the axes of symmetry of $B$ coincide with those of $D$.

\end{abstract}

\maketitle

\section {Introduction and Statement of the main Result}\label{1}
The relations between the shape of a domain and the eigenvalues of its Dirichlet or Neumann Laplacian,  have been intensively investigated since the 1920's when Faber \cite{F} and Krahn \cite{Kr} have proved independently the famous eigenvalue isoperimetric inequality first conjectured by Rayleigh (1877): the first Dirichlet eigenvalue $\lambda_1(\Omega)$ of any bounded domain $\Omega\subset\R^n$ satisfies
$$\lambda_1(\Omega)\ge \lambda_1(\Omega^*),$$
where $\Omega^*$ is a ball having the same volume as $\Omega$. We refer to the review papers of Ashbaugh \cite{A1,A2} and Henrot \cite{He} for a survey of recent results on optimization problems involving eigenvalues.

%%%The relations between the shape of a domain and the eigenvalues of its Dirichlet or Neumann Laplacian, constitute one of the most active research fields at the borderline of mathematical analysis and differential geometry. The first result in this topic dates from the famous eigenvalue isoperimetric inequality conjectured by Rayleigh and proved independently by Faber \cite{F} and Krahn \cite{Kr} in the 1920's : the first Dirichlet eigenvalue $\lambda_1(\Omega)$ of any bounded domain $\Omega\subset\R^n$ satisfies$$\lambda_1(\Omega)\ge \lambda_1(\Omega^*),$$where $\Omega^*$ is a ball having the same volume as $\Omega$. Moreover, the equality holds if and only if $\Omega$ is a ball. We refer to the review papers \cite{A1,A2,He} for a survey of recent results on optimization problems involving eigenvalues.

The present work deals with the following eigenvalue optimization problem: 
Given a bounded domain $D$, we want to place an obstacle (or a hole) $B$, of fixed shape, inside $D$ so as to maximize or minimize the fundamental eigenvalue $\lambda_1$ of the Laplacian or Schr\"odinger operator on $D\setminus B$ with Zero Dirichlet conditions on the boundary.

In other words, the problem is to optimize the principal eigenvalue function $\rho\mapsto \lambda_1(D\setminus \rho (B))$, where $\rho$ runs over the set of rigid motions such that $\rho (B)\subset D$.

The first result obtained in this direction concerned the case where both $D$ and $B$ are disks of given radii. Indeed, it follows from Hersch's work \cite{H2} that the maximum of $\lambda_1$ is achieved when the disks are concentric (see also \cite{RS}). This result has been extended to any dimension by several authors (Harrell, Kr\"oger and Kurata \cite{HKK}, Kesavan \cite{K}, ...). Actually, Harrell, Kr\"oger and Kurata \cite{HKK} gave a more general result showing that, if the domain $D$ satisfies an interior symmetry property with respect to a hyperplane $P$ passing through the center of the spherical obstacle $B$ (which means that the image by the reflection with respect to $P$ of one component of $D\setminus P$ is contained in $D$), then the Dirichlet fundamental eigenvalue $\lambda_1(D\setminus B)$ decreases when the center of $B$ moves perpendicularly to $P$ in the direction of the boundary of $D$. In the particular case where both the domain $D$ and the obstacle $B$ are balls, this implies that the minimum of $\lambda_1(D\setminus B)$ corresponds to the limit case where $B$ touches the boundary of $D$.
 
Notice that when the obstacle $B$ is a disk, only translations of $B$ may affect the $\lambda_1$ of  $D\setminus B$ and the optimal placement problem reduces to the choice of the center of $B$ inside $D$.

In the present work we investigate a kind of dual problem in the sense that we consider a \emph{nonspherical} obstacle $B$ whose center of mass is fixed inside $D$, and seek the optimal positions while turning $B$ around its center. 

It is of course hopeless to expect a universal solution to this problem. In fact, we will restrict our investigation to a class of domains satisfying a dihedral symmetry and a monotonicity conditions.

 Thus, let $D$ be a simply-connected plane domain and assume that the following conditions are satisfied:

(i) ($\mathbb{D}_n$-symmetry) for an integer $n\ge 2$, $D$ is invariant under the action of the dihedral group $\mathbb{D}_n$ of order $2n$ generated by the rotation
$\rho_{\frac{2\pi}n}$ of angle $\frac{2\pi}n$ and a reflection $S$. 
Such a domain admits $n$ axes of symmetry passing through the origin and such that the angle between 2 consecutive axes is $\frac\pi n$. 

(ii) (monotonicity of the boundary) the distance $d(O,x)$ from the origin to a point $x$ of the boundary of $D$ is monotonous as a function of the argument of $x$,
 in a sector delimited by two consecutive symmetry axes.

Notice that assumption (i) guarantees that the center of mass of $D$ is at the origin. Regular $n$-gones centered at the origin are the simplest examples of domains satisfying these assumptions. More generally, if $g$ is any positive even $\frac{2 \pi}n$-periodic continuous function that is monotonous on the interval $(0,\frac\pi n)$, then the domain
$$D=\{re^{\iz\theta}; \theta\in [0,2\pi),0\leq r< g(\theta)\},$$
satisfies assumptions (i) and (ii). Actually, up to a rigid motion, any domain satisfying assumptions (i) and (ii) can be parametrized in such a manner.

It is worth noticing that, due to the monotonicity condition, the ``distance to the origin" function on the boundary of $D$ achieves its maximum and its minimum alternatively at the intersection points of $\partial D$ with the $2n$ half-axes of symmetry. The $n$ points of $\partial D$ at maximal (resp. minimal) distance from the origin will be called "outer vertices" (resp. "inner vertices") of D.   

Our main result is the following

\begin{theorem}\label{main}  
Let $D$ and $B$ be two plane domains satisfying the assumptions of ${\mathbb D}_n$-symmetry and monotonicity (i) and (ii) above for an  integer $n\ge2$. Assume furthermore that $B$ has $C^2$ boundary and that $\rho (B)\subset D$ for all $\rho\in SO(2)$. Then, the fundamental Dirichlet eigenvalue $\lambda_1(D\setminus B)$ of $D\setminus B$ is optimized exactly when the axes of symmetry of $B$ coincide with those of $D$. 

The maximizing configuration corresponds to the case where the outer vertices of $B$ and $D$ lie on the same half-axes of symmetry (we will then say that $B$ occupies the ``ON" position in $D$).  

The minimizing configuration corresponds to the case where the outer vertices of $B$ lie on the half-axes of symmetry passing through the inner vertices of $D$ (this is what will be called the ``OFF" position).  

\end{theorem}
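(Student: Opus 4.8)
The plan is to reduce the problem to a one-variable optimization and then pin down the sign of the derivative by a reflection (moving-plane) argument. Writing $B_\alpha=\rho_\alpha(B)$ and $\Omega_\alpha=D\setminus B_\alpha$, the first observation is that $B_\alpha$ remains invariant under the rotation $\rho_{2\pi/n}$ for every $\alpha$, so $\Omega_\alpha$ always carries the cyclic symmetry $C_n$; its reflection axes, however, sit at angles $\alpha+k\pi/n$ and coincide with those of $D$ only when $\alpha\in\frac{\pi}{n}\Z$. Hence the map $f(\alpha)=\lambda_1(\Omega_\alpha)$ is $\frac{2\pi}{n}$-periodic and even about each multiple of $\frac{\pi}{n}$, so the two symmetric configurations, $\alpha=0$ (ON) and $\alpha=\frac{\pi}{n}$ (OFF), are automatically critical points. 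Since $\lambda_1$ is simple with a positive eigenfunction $u_\alpha$, and $\Omega_\alpha$ is $C_n$-invariant, $u_\alpha$ is itself $C_n$-invariant. It therefore suffices to prove that $f$ is strictly monotone on the open interval $(0,\frac{\pi}{n})$: this forces ON and OFF to be the only extremal positions, and the geometry will tell us which is which.

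First I would record the shape-derivative (Hadamard) formula. Because $\partial B$ is $C^2$, $u_\alpha$ is smooth up to $\partial\Omega_\alpha$, and rotating $B$ with the infinitesimal generator $V(x)=(-x_2,x_1)$ gives
\[
f'(\alpha)=\int_{\partial B_\alpha}\bigl|\partial_\nu u_\alpha\bigr|^2\,(V\cdot\nu)\,ds,
\]
with $\nu$ the unit normal on $\partial B_\alpha$. The zeros of $V\cdot\nu$ are exactly the points where the radius vector is normal to $\partial B_\alpha$, i.e. the outer and inner vertices of $B_\alpha$, and between consecutive vertices $V\cdot\nu$ keeps a constant sign.

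The core of the proof is a reflection across the symmetry axis $\Delta_\alpha$ of $B_\alpha$ (the line at angle $\alpha$), with associated reflection $\tau_\alpha$. By construction $\tau_\alpha(B_\alpha)=B_\alpha$, so only $D$ is moved. Here the monotonicity hypothesis (ii) enters decisively: for $\alpha\in(0,\frac{\pi}{n})$ the axis $\Delta_\alpha$ lies strictly between two consecutive axes of $D$, and monotonicity of the distance-to-boundary function yields the one-sided nesting
\[
\tau_\alpha\bigl(\Omega_\alpha\cap H^+\bigr)\subset \Omega_\alpha\cap H^-,
\]
where $H^\pm$ are the half-planes bounded by $\Delta_\alpha$, with $H^+$ the side facing the inner vertices of $D$. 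Setting $v=u_\alpha\circ\tau_\alpha$ on the smaller cap $\Omega^+=\Omega_\alpha\cap H^+$, the function $w=v-u_\alpha$ solves $(-\Delta-\lambda_1)w=0$ on $\Omega^+$, vanishes on $\partial B_\alpha\cap H^+$ and along $\Delta_\alpha$, and is nonnegative on $\partial D\cap H^+$ (where $u_\alpha=0$ while $v>0$, by the strict nesting). As $\Omega^+\subsetneq\Omega_\alpha$ we have $\lambda_1(\Omega^+)>\lambda_1(\Omega_\alpha)$, so $-\Delta-\lambda_1$ satisfies the maximum principle on $\Omega^+$; hence $w>0$ inside and, by Hopf's lemma, $|\partial_\nu u_\alpha|$ is strictly larger at each point of $\partial B_\alpha\cap H^-$ than at its $\tau_\alpha$-image on $H^+$.

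Finally I would feed this ordering back into the Hadamard formula. Since $\tau_\alpha$ reverses the normal flux, $(V\cdot\nu)(\tau_\alpha p)=-(V\cdot\nu)(p)$, so grouping $\partial B_\alpha$ into $\tau_\alpha$-pairs turns $f'(\alpha)$ into an integral over $\partial B_\alpha\cap H^+$ of $(V\cdot\nu)(p)\bigl[\,|\partial_\nu u_\alpha(p)|^2-|\partial_\nu u_\alpha(\tau_\alpha p)|^2\,\bigr]$, whose bracket is everywhere negative by the Hopf step. The remaining, and genuinely delicate, point is to check that this integral has a fixed sign for all $\alpha\in(0,\frac{\pi}{n})$: the factor $V\cdot\nu$ alternates sign from one boundary arc of $B_\alpha$ to the next, and reconciling this alternation with the monotone Hopf ordering is where the full $\mathbb{D}_n$-symmetry of $B$ (not merely $C_n$) and the monotonicity of its own boundary must be used to organize the arcs so that every paired contribution pushes $\lambda_1$ the same way. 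I expect this sign bookkeeping to be the main obstacle. Once it is settled, $f$ is strictly decreasing from $\alpha=0$ to $\alpha=\frac{\pi}{n}$, the obstacle sliding toward the narrow part of $D$, exactly the Harrell--Kr\"oger--Kurata mechanism, so ON is the strict global maximum and OFF the strict global minimum, attained precisely when the axes of $B$ and $D$ coincide.
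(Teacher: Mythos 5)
Your overall strategy (periodicity and evenness of $t\mapsto\lambda_1(\Omega_t)$, Hadamard's formula, a reflection across a symmetry axis of the rotated obstacle, Hopf's lemma) is the same as the paper's, but the proposal has two genuine gaps, and the second one you explicitly leave open. First, the half-plane nesting $\tau_\alpha\bigl(\Omega_\alpha\cap H^+\bigr)\subset\Omega_\alpha\cap H^-$ is false in general. Writing $D=\{re^{\iz\theta}:0\le r<g(\theta)\}$ with $g$ even, $\tfrac{2\pi}{n}$-periodic and nondecreasing on $(0,\tfrac\pi n)$, the inclusion for the reflection across the line at angle $\alpha\in(0,\tfrac\pi n)$ amounts to $g(\theta)\le g(2\alpha-\theta)$ for all $\theta$ in a half-circle of length $\pi$, i.e.\ across $n$ consecutive monotonicity sectors of $g$. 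This fails already for $n=2$, $\alpha=\pi/4$: at $\theta=\pi/2$ one gets $g(\pi/2)-g(0)>0$, while at $\theta=\pi$ one gets $g(0)-g(\pi/2)<0$. The hypothesis (ii) only yields a containment of \emph{one} sector of opening $\pi/n$ into the \emph{adjacent} sector of opening $\pi/n$ (this is the paper's Lemma~\ref{prosymint}), not of a half-plane into a half-plane. Second, even granting a pointwise Hopf ordering on $\partial B_\alpha\cap H^-$, the factor $V\cdot\nu$ changes sign $2n$ times around $\partial B_\alpha$, and you acknowledge that you do not know how to reconcile this alternation with the ordering; as written, the argument does not produce a sign for $f'(\alpha)$.

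The paper resolves both difficulties with one folding step that your proposal is missing: since $u$ is invariant under $\rho_{2\pi/n}$ and $\eta_t\cdot v$ is $\tfrac{2\pi}{n}$-periodic, the Hadamard integral equals $n$ times the integral over $\partial B_t\cap\sigma(t,\tfrac{2\pi}n+t)$; then the antisymmetry of $\eta_t\cdot v$ about the axis $z_{\pi/n+t}$ (a symmetry axis of $B_t$ through its \emph{outer} vertices, not the inner ones you chose) folds this onto the single arc $\partial B_t\cap\sigma(\tfrac\pi n+t,\tfrac{2\pi}n+t)$ lying between two consecutive vertices of $B_t$, on which $\eta_t\cdot v$ has a constant sign. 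The reflection and maximum-principle argument is then carried out only on the $\pi/n$-sector $H(t)=\Omega(t)\cap\sigma(\tfrac\pi n+t,\tfrac{2\pi}n+t)$, where the sector-to-sector containment does hold. Without this reduction to a fundamental arc, both the geometric inclusion and the sign bookkeeping break down, so the proposal as it stands does not prove the theorem.
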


Actually, we will prove that, except for the trivial case where $D$ or $B$ is a disk, the fundamental Dirichlet eigenvalue of $D\setminus B$ decreases gradually when $B$ switches from  ``ON" to ``OFF". 

The main ingredients of the proof of Theorem \ref{main} are Hadamard's variation formula for $\lambda_1$ and the technique of domain reflection initiated by Serrin \cite{Serrin} in PDE's setting.

\begin{center}
\includegraphics[angle=0,width=5cm]{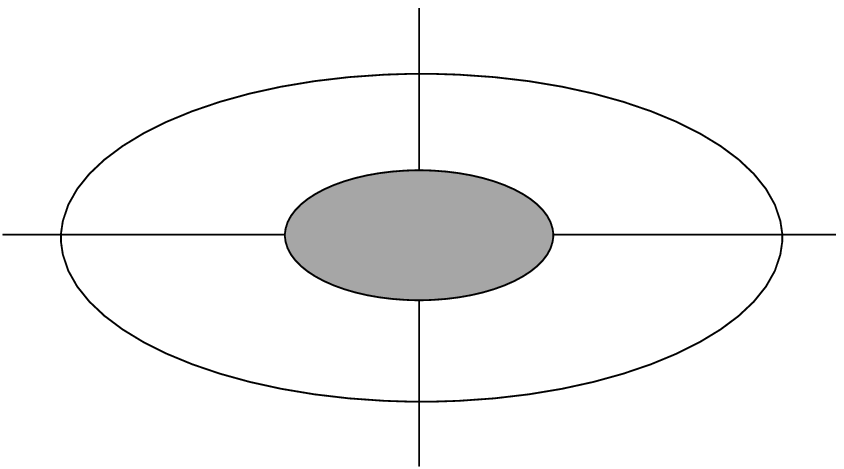} \hspace{1cm}
\includegraphics[angle=0,width=5cm]{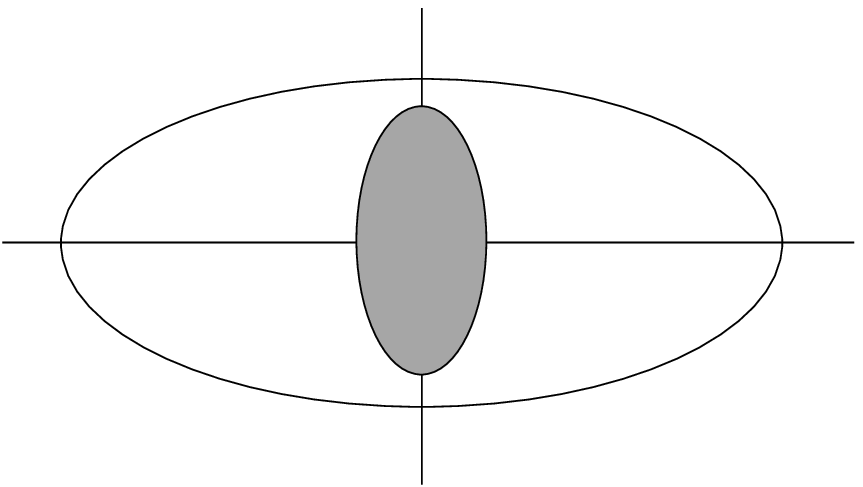}
\end{center}

\begin{center}
\includegraphics[angle=0,width=5cm]{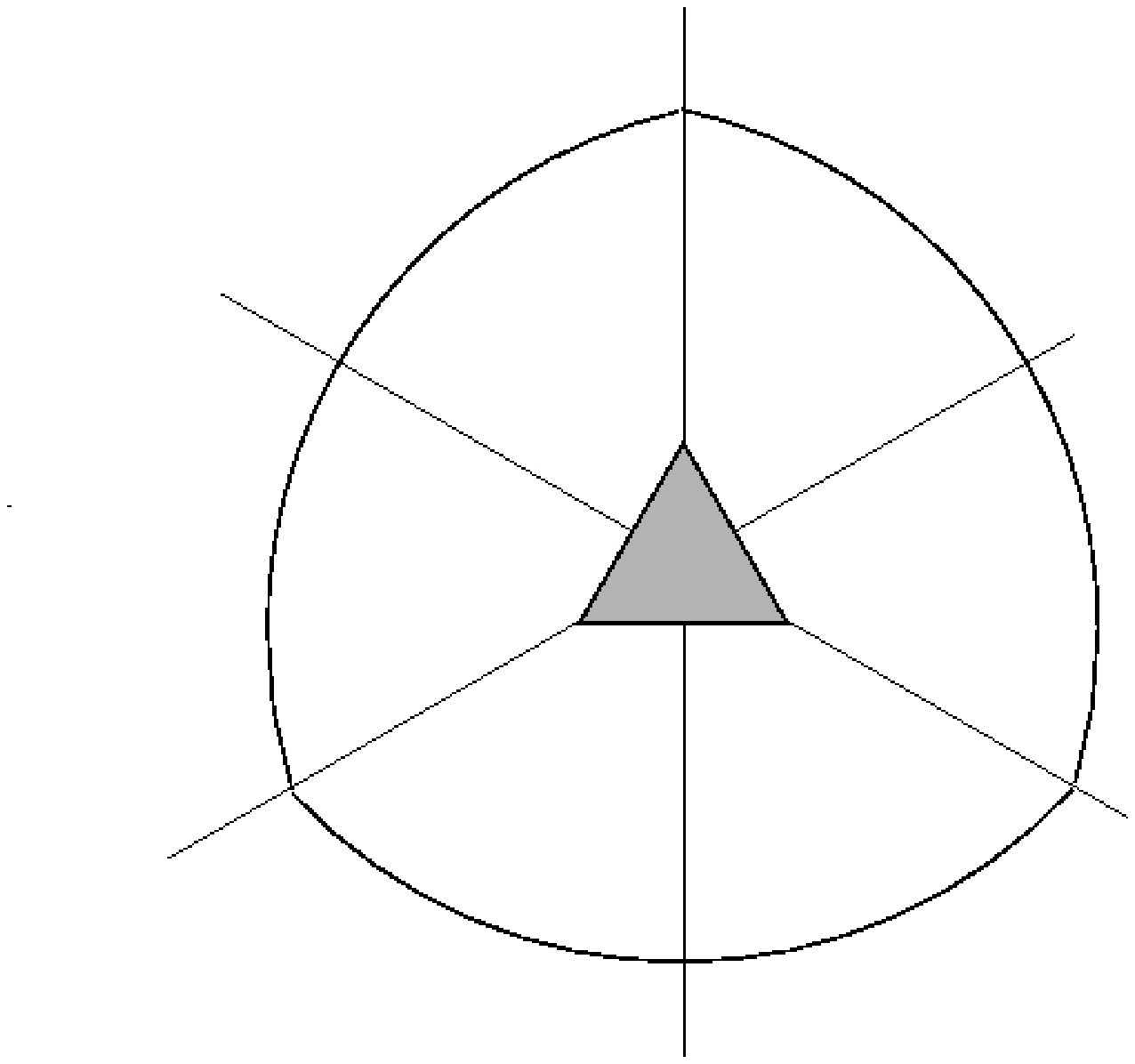}  \hspace{1cm}
\includegraphics[angle=0,width=5cm]{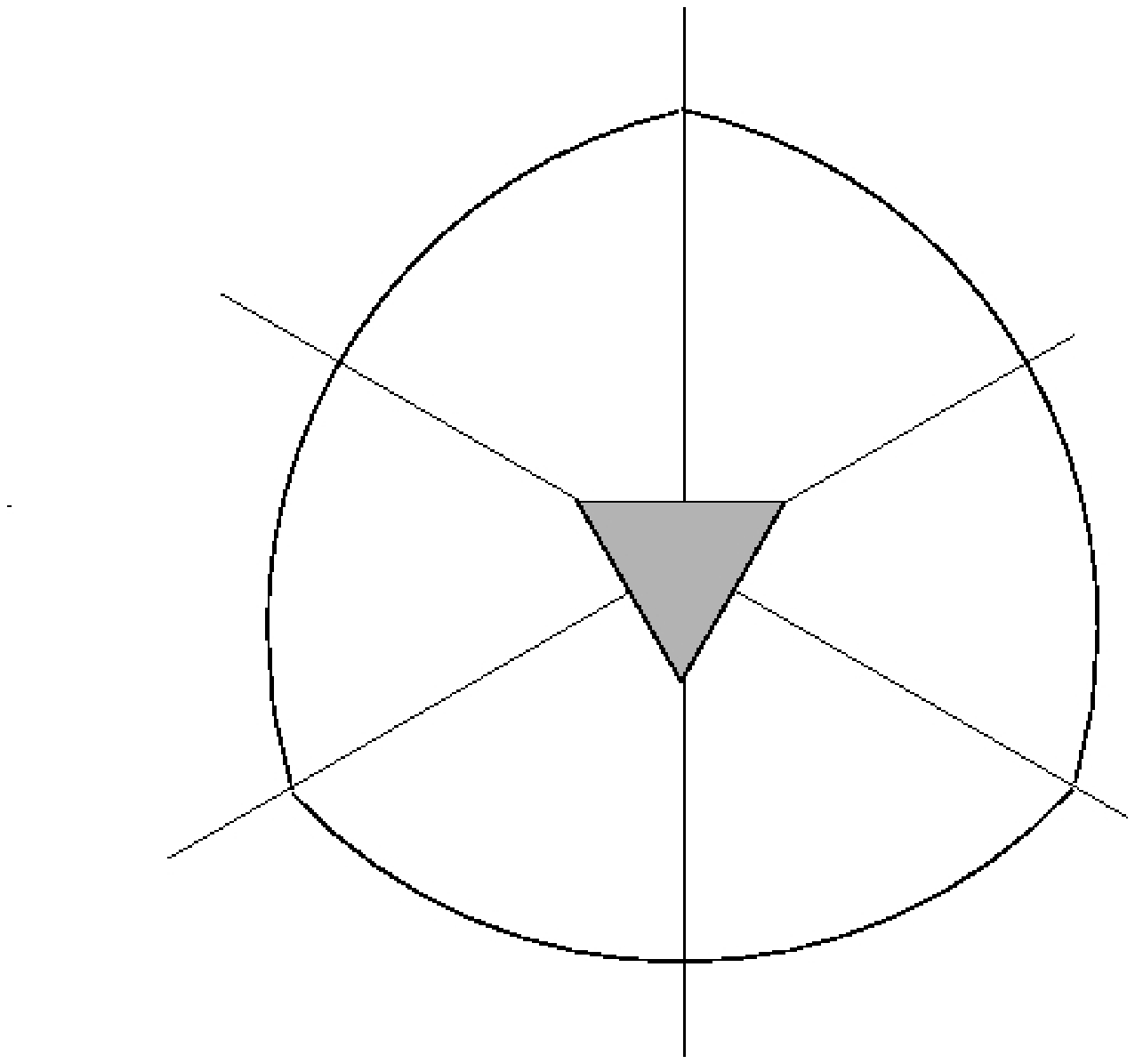}
\end{center}

\begin{center}
\includegraphics[angle=0,width=5cm]{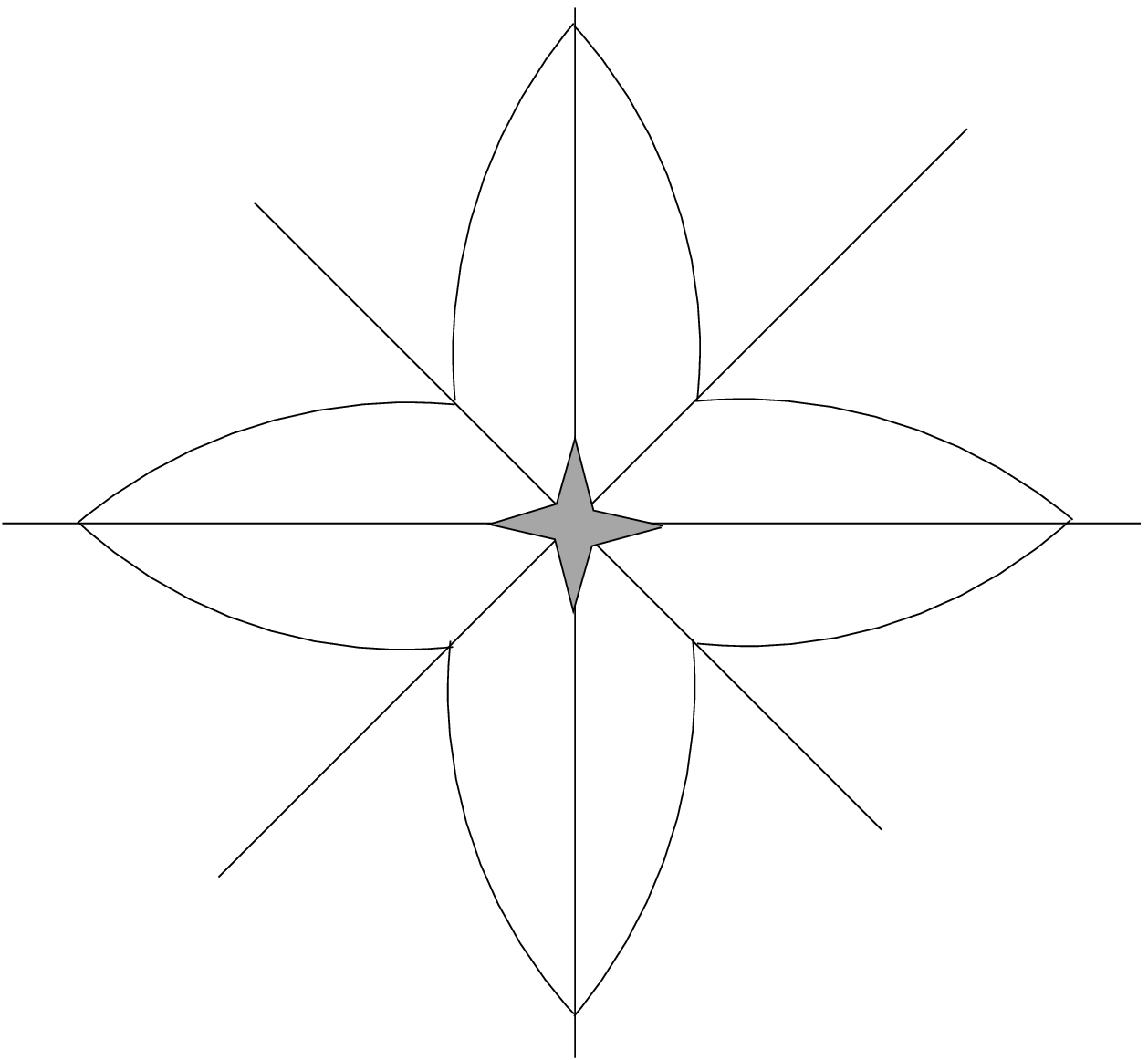}\hspace{1cm}
\includegraphics[angle=0,width=5cm]{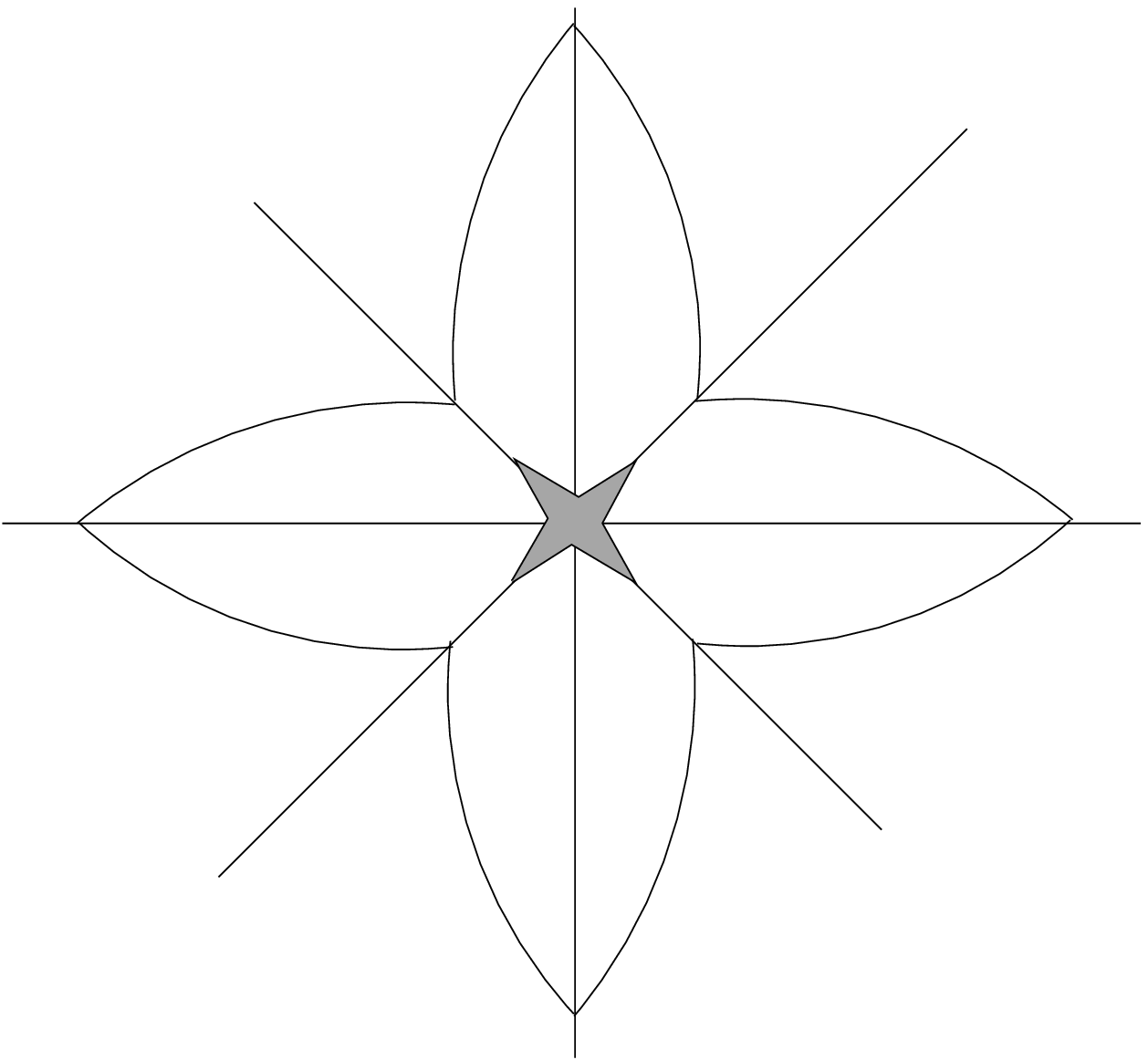}\\
{\small Examples of maximal (left) and minimal (right) configurations\\ with $n=2,$ $3$ and $4$ respectively}
\end{center}

Extensions of Theorem 1 to the following situations can be obtained up to slight changes in the proof (indeed, only the Hadamard formula should be replaced by the variation formula corresponding to the new functional):
\begin{enumerate}
	\item Soft obstacles: instead considering the Dirichlet Laplacian on $D\setminus  B$, we consider  the Schrödinger type operator 
$$H(\alpha,B):=\Delta - \alpha\chi_B$$
acting on $H^1_0(D)$,
where $\alpha>0$ and $\chi_B$ is the indicator function of $B$. Optimization problems related to the fundamental eigenvalue of operators of this kind have been investigated in particular in \cite {HKK} and \cite{ C}. Under the assumptions of Theorem \ref{main} on $D$ and $B$, $\forall \alpha>0$, the fundamental eigenvalue of $H(\alpha,B)$ achieves its maximum at the ``ON'' position and its minimum at the ``OFF" position. 
\item Wells: this case corresponds to the operator $H(\alpha,B)$ with $\alpha<0$. Under the circumstances of Theorem \ref{main}, $\forall \alpha<0$, the first eigenvalue of $H(\alpha,B)$ achieves its maximum at the ``OFF'' position and its minimum at the ``ON" position.  
	\item  Stationary problem : the problem now is to optimize the Dirichlet energy $J(D\setminus B):=\int_{D\setminus B} |\nabla u|^2 dx$ of the unique solution $u$ of the problem
	\begin{displaymath}
\left\{\begin{array}{rcll}
\Delta u&=&- 1 &\textrm{in  } D\setminus B\\
u&=&0  &\textrm{on  }\partial (D\setminus B),
\end{array}\right.
\end{displaymath}
This problem was treated in \cite[Section 2]{K} in the case where both $D$ and $B$ are balls. Under the assumptions of Theorem \ref{main} on $D$ and $B$, one can prove that $J(D\setminus B)$ achieves its maximum when $B$ is at the ``ON'' position and its minimum when $B$ is at the ``OFF" position.
\end{enumerate}

 \section {Proof of the main result}\label{2}
 Without loss of generality, we may assume that the domain $D$ and the obstacle $B$ are centered at the origin and are both symmetric with respect to the $x_1$-axis so that they can be parametrized in polar coordinates by
   $$D=\{re^{\iz\theta}; \theta\in [0,2\pi),0\leq r< g(\theta)\},$$
   $$B=\{re^{\iz\theta}; \theta\in [0,2\pi),0\leq r< f(\theta)\},$$
   where $f$ and $g$ are two 
 positive even $\frac{2 \pi}n$-periodic functions which are \emph{nondecreasing} on  $(0,\frac\pi n)$. To avoid technicalities, we suppose throughout that $g$ is continuous and $f$ is $C^2$.  Extensions of our result to a wider class of domains would certainly be possible up to some additional technical difficulties.

 The condition that  the obstacle $B$ can freely rotate around his center inside $D$, that is $\rho (\bar B)\subset D$ for all $\rho\in SO(2)$, amounts to the following:
 $$f(\frac\pi n)=\max_{0\le\theta\le 2\pi} f(\theta)< \min_{0\le\theta\le 2\pi} g(\theta)=g(0).$$
 
Let us denote, for all $t\in\R$, by $\rho_t$ the rotation of angle $t$, that is, $\forall \zeta\in\R^2\cong \C$,  $\rho_t(\zeta)=e^{\iz t}\zeta$, and set 
\begin{center}
$B_t:=\rho_t(B)$ and $\Omega(t):=D\setminus B_t$.
\end{center}
 Let $\lambda(t)$ be the fundamental eigenvalue of the 
Dirichlet Laplacian on $\Omega(t)$. It is well known that, since it is simple, the first Dirichlet eigenvalue $\lambda(t)$ is a differentiable function of $t$ (see \cite{GS, Re} ). We denote by $u(t)$ the one parameter family of nonnegative first eigenfunctions satisfying, $\forall t\in\R$,
\begin{displaymath}%le systeme differentiel
\left\{\begin{array}{rcll}
\Delta u(t)&=&- \lambda(t) u(t) &\textrm{in  } \Omega(t)\\
u(t)&=&0  &\textrm{on  }\partial\Omega(t)\\
\int_{\Omega(t)} u^2(t)&=&1.
\end{array}\right.
\end{displaymath}
The derivative of $\lambda(t)$ is then given by the following so-called Hadamard formula (see \cite{EI1, GS, Ha,Sc2}):
\begin{equation}\label{hadamard}%hadamard
\lambda'(t)= \int_{\partial B_t}
\left|\frac{\partial u(t)}{\partial {\eta_t}}\right|^2 {\eta_t}\cdot{v} \ d\sigma,
\end{equation}
where ${\eta_t}$ is the inward unit normal vector field of $\partial\Omega(t)$ (hence, along $\partial B_t$ the vector ${\eta_t}$ is outward with respect to $B_t$) and 
${v}$ denotes the restriction to $\partial\Omega(t)=\partial D\cup \partial B_t$ of the deformation vector field. In our case, the vector ${v}$ vanishes on $\partial D$ and is given by ${v}(\zeta)=\iz \zeta
$ for all $\zeta \in  \partial B_t$. 

Since both $\Omega$ and $B$ are invariant by the dihedral group $\mathbb{D}_n$, it follows that, $\forall t\in\R$,  $\Omega (t+\frac {2\pi} {n})=\Omega_t$.  Moreover, if we denote by $S_0$ the reflection with respect to the $x_1$-axis, then we clearly have $\rho_{-t}=S_0\circ \rho_t\circ S_0$ which gives $B_{-t}= S_0 (B_t)$ and $\Omega_{-t}= S_0 (\Omega_t)$. 
Hence, as a function of $t$, the first Dirichlet eigenvalue of $\Omega_t$ is even and periodic of period $\frac {2\pi} {n}$, that is, $\forall t\in\R$,
\begin{center}
$\lambda(t+\frac {2\pi} {n})=\lambda(t)$ and $\lambda(-t)=\lambda(t)$.
\end{center}
 Therefore, it suffices to investigate the variations of $\lambda(t)$ on the interval $\left[0,\frac \pi n\right]$ and  Theorem \ref{main} is a consequence of the following:

\begin{theorem}\label{mainth}%mainth 
Assume that neither $D$ nor $B$ is a disk. 
\begin{itemize}
	\item[(i)] $\forall t\in \left(0,\frac\pi n\right)$, $\lambda'(t)<0$. Hence, $\lambda(t)$ is strictly decreasing on  $\left(0,\frac \pi n\right)$.
	\item[(ii)]  $\forall k\in\Z$, $\lambda'(k\frac\pi n)=0$ and $k\frac\pi n, \; k\in\Z$, are the only critical points of $\lambda$ on $\R$.
\end{itemize}
\end{theorem}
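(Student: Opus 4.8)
The plan is to first dispose of part (ii) by pure symmetry, and then concentrate all the work on the strict inequality of part (i), for which I would combine the Hadamard formula \eqref{hadamard} with a reflection across a symmetry axis of $B_t$. Since $\lambda$ is even and $\frac{2\pi}{n}$-periodic, it is also symmetric about each point $k\frac\pi n$ (indeed $\lambda(\frac\pi n+s)=\lambda(s-\frac\pi n)=\lambda(\frac\pi n-s)$), so every $k\frac\pi n$ is automatically a critical point and $\lambda'(k\frac\pi n)=0$ needs no computation. Granting (i), the strict monotonicity on $(0,\frac\pi n)$ forbids interior critical points, and periodicity together with evenness then show that $\{k\frac\pi n\}$ are the only ones, giving (ii). So the heart of the matter is (i).

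For (i), parametrize $\partial B_t$ by $\theta\mapsto f(\theta-t)e^{\iz\theta}$. A direct computation of the outward normal shows that, at the boundary point of argument $\theta$, the quantity $\eta_t\cdot v$ is a positive multiple of $-f(\theta-t)f'(\theta-t)$, hence has the sign of $-f'(\theta-t)$: the weight is $\le0$ on the arc running from the inner vertex (argument $t$) to the outer vertex (argument $t+\frac\pi n$), is $\ge0$ on the mirror arc from $t-\frac\pi n$ to $t$, and vanishes exactly at the vertices of $B_t$. Let $P$ be the symmetry axis of $B_t$ through its inner vertex (the line of argument $t$) and $\sigma$ the reflection across $P$. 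Because $v(\zeta)=\iz\zeta$ reverses under any reflection through the origin and $\sigma$ preserves $\partial B_t$, the weight $\eta_t\cdot v$ is antisymmetric under $\sigma$: the points $\zeta_+=f(s)e^{\iz(t+s)}$ and $\zeta_-=\sigma(\zeta_+)=f(s)e^{\iz(t-s)}$ carry opposite weights $\mp c(s)$ with $c(s)\ge0$. Since $u(t)$ and $\partial B_t$ are invariant under the rotation subgroup $C_n$, the integrand in \eqref{hadamard} is $C_n$-invariant, so $\lambda'(t)=n\int_A$ over the fundamental arc $A$ of argument in $(t-\frac\pi n,t+\frac\pi n)$; pairing $\zeta_+$ with $\zeta_-$ reduces everything to proving
\[
\left|\frac{\partial u(t)}{\partial\eta_t}(\zeta_+)\right|\ \ge\ \left|\frac{\partial u(t)}{\partial\eta_t}(\zeta_-)\right|,\qquad s\in(0,\tfrac\pi n).
\]

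This inequality is where the monotonicity of $g$ and Serrin's reflection enter. Writing $\Sigma^-$ for the sector of arguments in $(t-\frac\pi n,t)$ and $\Sigma^+$ for its reflection, the monotonicity of $g$ yields the inclusion $\sigma(D\cap\Sigma^-)\subseteq D$: reducing $g$ to its distance-to-nearest-inner-vertex profile, the claim amounts to $g(t+s)\ge g(t-s)$ for $0<s<\frac\pi n$, which a short case check confirms precisely because $0<t<\frac\pi n$. Setting $\tilde\Omega=\sigma(\Omega(t)\cap\Sigma^-)\subseteq\Omega(t)$ and $w=u(t)-u(t)\circ\sigma$, one has $\Delta w+\lambda w=0$ in $\tilde\Omega$. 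On $\partial\tilde\Omega$, $w$ vanishes on $P$ and on $\partial B_t$, is $\ge0$ on the reflected part of $\partial D$ (where $u(t)\ge0$ while $u(t)\circ\sigma=0$), and, crucially, vanishes on the artificial radial edge of argument $t+\frac\pi n$ because the $C_n$-invariance of $u(t)$ gives $u(t)\circ\sigma=u(t)$ there. As $\tilde\Omega\subsetneq\Omega(t)$ forces $\lambda<\lambda_1(\tilde\Omega)$, the maximum principle for $\Delta+\lambda$ applies and yields $w\ge0$ in $\tilde\Omega$; Hopf's lemma on $\partial B_t\cap\Sigma^+$ then gives the desired normal-derivative inequality, whence $\lambda'(t)\le0$.

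For the strict inequality, observe that $D$ not a disk makes $g$ strictly increasing on a subinterval of $(0,\frac\pi n)$, so the inclusion $\sigma(D\cap\Sigma^-)\subseteq D$ is strict on a subarc; the strong maximum principle then gives $w>0$ there, the normal-derivative inequality becomes strict on a set of positive measure, and $B$ not a disk keeps $c(s)\not\equiv0$, which together force $\lambda'(t)<0$. I expect the main obstacle to be exactly the bookkeeping of this reflection: verifying that the monotonicity of $g$ delivers the inclusion on the whole fundamental sector $\Sigma^-$ (and not on a larger half-plane, where it fails), and checking that the $C_n$-invariance of $u(t)$ annihilates the boundary term on the artificial radial edge so that the maximum principle can be run on $\tilde\Omega$. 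Once these two geometric facts are in place, the analytic comparison is standard.
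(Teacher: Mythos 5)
Your proposal is correct and follows essentially the same route as the paper: Hadamard's formula, reduction to a fundamental sector via the $C_n$-invariance of $u(t)$ and of $\eta_t\cdot v$, the reflected-sector inclusion driven by the monotonicity of $g$ (the paper's Lemma \ref{prosymint}), and the maximum principle plus Hopf's lemma to compare normal derivatives at mirror points of $\partial B_t$. The only cosmetic differences are that you reflect across the inner-vertex axis $z_t$ rather than the outer-vertex axis $z_{\frac\pi n+t}$ (these are conjugate under $\rho_{\frac{2\pi}n}$, so the two comparisons are identical), and that you justify $w\ge 0$ by the generalized maximum principle for $\Delta+\lambda$ using $\lambda<\lambda_1(\tilde\Omega)$, where the paper instead excludes a positive nodal domain by placing $n$ rotated copies of it in $\Omega(t)$ and invoking domain monotonicity.
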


Hence, $\lambda(t)$ achieves its maximum for $t=0 \mod \frac{2\pi}{n}$ which corresponds to the ``ON'' position,  and its minimum for $t=\frac{\pi}{n} \mod \frac{2\pi}{n}$ which corresponds to the ``OFF'' position. Of course, if $D$ or $B$ is a disk, then the function $\lambda(t)$ is constant.

In what follows we will denote, for any $\alpha\in\R$, by $z_\alpha$ the $\theta=\alpha$ axis, that is $z_\alpha:=\{re^{\iz\alpha};\; r\in\R\}$, and by $z^+_\alpha$ the half-axis $\{re^{\iz\alpha};\; r\geq 0\}$.

We start the proof with the following elementary lemma.
\begin{lemma}\label{n.v}%lemme n.v
Let $K$ be a plane domain defined in polar coordinates by 
$K=\{re^{\iz\theta}; \theta\in [0,2\pi),0\leq r< h(\theta)\},$
where $h$ is a positive $2\pi$-periodic function of classe $C^1$, and let $v$ be a vector field whose restriction to $\partial K$ is given by
$$v(\theta):={v}(h(\theta)e^{\iz\theta})=\iz h(\theta)e^{\iz\theta}
=h(\theta)e^{\iz(\theta+\frac\pi 2)}.$$ 
We denote by $\eta$ the unit outward normal vector field of $\partial K$. One has, at any point $h(\theta)e^{\iz\theta}$ of $\partial K$ where $\eta$ is defined,  
\begin{enumerate}
\item[(i)] 
$\eta(\theta):={\eta}(h(\theta)e^{\iz\theta})=\frac{h(\theta)e^{\iz\theta}-\iz h'(\theta)e^{\iz\theta}}
{\sqrt{h^2(\theta)+h'^2(\theta)}}$ 
\item[(ii)] ${\eta}\cdot{v}(\theta)=
      \frac{-h(\theta)h'(\theta)}{\sqrt{h^2(\theta)+h'^2(\theta)}}$. Hence, ${\eta}.{v}(\theta)$  has constant sign on an interval 
$I$ if and only if $h$ is monotonous in $I$.
\item[(iii)] if for some $\alpha>0$, the domain $K$ is symmetric with respect to the axis $z_\alpha$, then the function ${\eta}\cdot{v}$ is antisymmetric w.r.t this axis, that is
     $${\eta}\cdot{v}(\alpha+\theta )=-{\eta}\cdot{v}(\alpha -\theta).$$
\end{enumerate}
\end{lemma}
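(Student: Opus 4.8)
The plan is to prove all three parts by a single direct computation in the complex plane, identifying $\R^2\cong\C$ and writing the Euclidean dot product of two vectors $z,w$ as $z\cdot w=\mathrm{Re}(z\bar w)$. Everything then reduces to differentiating the boundary parametrization $\gamma(\theta)=h(\theta)e^{\iz\theta}$ and keeping careful track of signs. For (i), I would start from the tangent vector $\gamma'(\theta)=(h'(\theta)+\iz h(\theta))e^{\iz\theta}$, whose modulus is $\sqrt{h^2+h'^2}$. As $\theta$ increases the boundary is traversed counterclockwise, so the interior lies to the left and the outward unit normal is the unit tangent rotated clockwise by $\pi/2$, i.e. multiplied by $-\iz$. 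The one sign convention worth checking explicitly is precisely this orientation, which I would verify on the circle $h\equiv\mathrm{const}$ (where $\eta$ must be the radial field $e^{\iz\theta}$). Multiplying $\gamma'$ by $-\iz$ and normalizing gives $\eta(\theta)=(h-\iz h')e^{\iz\theta}/\sqrt{h^2+h'^2}$, which is the asserted formula.

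For (ii), I would simply substitute $\eta(\theta)$ and $v(\theta)=\iz h(\theta)e^{\iz\theta}$ into $\eta\cdot v=\mathrm{Re}(\eta\bar v)$. Since $h$ is real, $\bar v=-\iz h e^{-\iz\theta}$, the factors $e^{\pm\iz\theta}$ cancel, and one is left with $\eta\cdot v=\mathrm{Re}\big((h-\iz h')(-\iz h)\big)/\sqrt{h^2+h'^2}=-hh'/\sqrt{h^2+h'^2}$. Because $h>0$ and the denominator is strictly positive, the sign of $\eta\cdot v$ is exactly the opposite of the sign of $h'$; hence $\eta\cdot v$ keeps a constant sign on an interval $I$ if and only if $h'$ does not change sign on $I$, i.e. if and only if $h$ is monotonous there, giving the ``hence'' clause.

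For (iii), the only genuine content is translating the geometric symmetry into a functional equation for $h$. Reflection in the axis $z_\alpha$ acts on arguments by $\theta\mapsto 2\alpha-\theta$, so invariance of $K$ is equivalent to $h(\alpha+\theta)=h(\alpha-\theta)$ for all $\theta$; differentiating yields $h'(\alpha+\theta)=-h'(\alpha-\theta)$. Substituting these two relations into the expression from (ii), and noting that the denominator involves $h'$ only through $h'^2$ and is therefore unchanged under $\theta\mapsto-\theta$ about $\alpha$, the numerator at $\alpha+\theta$ equals the numerator at $\alpha-\theta$ up to an overall sign, which delivers $\eta\cdot v(\alpha+\theta)=-\eta\cdot v(\alpha-\theta)$. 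I do not expect a real obstacle anywhere: the lemma is elementary, and the only points demanding care are fixing the orientation convention in (i) consistently and correctly reading off the $\theta\mapsto 2\alpha-\theta$ action of the reflection in (iii).
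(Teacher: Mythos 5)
Your proposal is correct and follows essentially the same route as the paper, which simply declares (i) and (ii) to be direct consequences of the definition of $K$ and derives (iii) from the relation $h(\alpha+\theta)=h(\alpha-\theta)$ combined with (ii); you merely carry out the computations the paper leaves implicit (tangent vector, rotation by $-\iz$, the identity $\eta\cdot v=\mathrm{Re}(\eta\bar v)$, and the differentiated symmetry relation $h'(\alpha+\theta)=-h'(\alpha-\theta)$), all of which check out.
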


\begin{proof}
Assertions (i) and (ii) are direct consequences from the definition of $K$. The fact that $K$ is  symmetric with respect to the axis $z_\alpha$ implies that the function $h$ satisfies $h(\alpha+\theta)=h(\alpha -\theta)$. Therefore, (iii) follows immediately from (ii).
\end{proof}

We will denote by $S_\alpha$ 
the symmetry with respect to the axis $z_\alpha$. We will also denote, for $\alpha <\beta$,  by $\sigma\left(\alpha,\beta\right)$ the sector delimited by $z^+_\alpha$ and $z^+_\beta$, that is
$$\sigma\left(\alpha,\beta\right)=\{r e^{\iz\theta}; r> 0\;\mbox{and} \; \alpha<\theta<\beta\}.$$

\begin{lemma}\label{prosymint}%prosymint
Let $D$ be as above. For all $t\in\left(0,\frac\pi n\right)$, we have:
$$S_{\frac\pi n+t}\left(D\cap\sigma
\left(\frac\pi n+t,\frac{2\pi}n+t\right)\right)\subseteq D\cap\sigma\left( t,\frac\pi n+t\right).$$
Moreover,  if $D$ is not a disk, then $$S_{\frac\pi n+t}\left(\partial D\cap\sigma
\left(\frac\pi n+t,\frac{2\pi}n+t\right)\right)\cap D\neq \emptyset.$$
\end{lemma}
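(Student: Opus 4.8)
The plan is to reduce the set inclusion to a pointwise inequality between the radial function $g$ at an angle and at its mirror image, and then to read off the strict (nonemptiness) statement from the case of equality. First I would observe that the reflection $S_{\frac\pi n+t}$ fixes the half-axis $z^+_{\frac\pi n+t}$, preserves the modulus of every point, and sends the angle $\frac\pi n+t+\phi$ to $\frac\pi n+t-\phi$. Hence it maps the sector $\sigma(\frac\pi n+t,\frac{2\pi}n+t)$ bijectively onto the adjacent sector $\sigma(t,\frac\pi n+t)$, carrying the boundary point $g(\theta)e^{\iz\theta}$ to $g(\theta)e^{\iz\theta'}$ with $\theta'=2(\frac\pi n+t)-\theta$. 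Writing $\theta=\frac\pi n+t+\phi$ with $\phi\in(0,\frac\pi n)$, so that $\theta'=\frac\pi n+t-\phi$, the desired inclusion becomes \emph{equivalent} to the radial inequality $g(\theta)\le g(\theta')$ for all $\phi\in(0,\frac\pi n)$: a point $re^{\iz\theta}\in D$ in the outer sector (i.e. $r<g(\theta)$) has image $re^{\iz\theta'}$ lying in $D$ precisely when $r<g(\theta')$.

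Second, I would establish $g(\theta)\le g(\theta')$ using only the symmetry and monotonicity of $g$. Since $g$ is even, $\frac{2\pi}n$-periodic and nondecreasing on $(0,\frac\pi n)$, it is symmetric about every axis $z_{k\pi/n}$, with its minima (the inner vertices) attained on $\frac{2\pi}n\Z$ and its maxima on $\frac\pi n+\frac{2\pi}n\Z$; consequently $g(\psi)=\tilde g\big(\mathrm{dist}(\psi,\tfrac{2\pi}n\Z)\big)$ for a nondecreasing function $\tilde g$ on $[0,\frac\pi n]$. Thus $g(\theta)\le g(\theta')$ would follow from $\mathrm{dist}(\theta,\frac{2\pi}n\Z)\le \mathrm{dist}(\theta',\frac{2\pi}n\Z)$, which with $p:=\frac\pi n$ unwinds to the elementary inequality
\begin{equation*}
|t+\phi-p|+|t-\phi|\le p\qquad(t,\phi\in(0,p)),
\end{equation*}
which I would verify by a short sign analysis (the left side equals $2\max(t,\phi)-p$ when $t+\phi>p$ and $(p-t-\phi)+|t-\phi|$ otherwise, each being $<p$ for $t,\phi\in(0,p)$); in particular the inequality is in fact \emph{strict} throughout the open sector, which yields the first inclusion.

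Third, for the nonemptiness statement I would note that the reflected boundary point $g(\theta)e^{\iz\theta'}$ lies in the open set $D$ exactly when $g(\theta)<g(\theta')$. Suppose, toward a contradiction, that no such point exists; combined with the inclusion just obtained this forces $g(\theta)=g(\theta')$ for every $\phi$, that is, $g$ is symmetric about the axis $z_{\frac\pi n+t}$. Composing this reflection with the (global) reflection about $z_{\frac\pi n}$ gives the rotation by $2t$, so that $g(\theta+2t)=g(\theta)$ on an arc of full length $\frac{2\pi}n$, hence everywhere by continuity. Then the group of periods of $g$ is $\tau\Z$ with $\frac{2\pi}n=N\tau$ and $N\ge2$ (since $2t$ is a period and $0<2t<\frac{2\pi}n$); as $\tau\le\frac\pi n$, the function $g$ is nondecreasing on $[0,\tau]$ with $g(0)=g(\tau)$, hence constant there and, by periodicity, constant everywhere, i.e. $D$ is a disk, contradicting the hypothesis.

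I expect the genuine difficulty to be concentrated in this last step: converting the \emph{geometric} failure ``the reflected arc never enters $D$'' into the \emph{rigidity} conclusion ``$g$ is constant''. The inclusion itself is essentially a one-line monotonicity computation once the problem is rephrased through $\mathrm{dist}(\cdot,\frac{2\pi}n\Z)$, whereas extracting constancy requires manufacturing the extra translation symmetry by $2t$, checking that the induced reflection symmetry genuinely propagates from the single sector to the whole circle, and then playing the resulting finer period against the monotonicity of $g$ on one fundamental sector.
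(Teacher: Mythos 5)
Your proof is correct, and it reaches the paper's conclusions by a partly different route. For the inclusion, the core idea is the same as the paper's --- reduce to the pointwise radial inequality $g(\theta)\le g\bigl(2(\tfrac\pi n+t)-\theta\bigr)$ and deduce it from evenness, $\tfrac{2\pi}n$-periodicity and monotonicity of $g$ --- but where the paper splits into four cases according to the position of $\theta$ relative to $2t$ and $\tfrac\pi n$, you encode all the symmetry at once by writing $g(\psi)=\tilde g\bigl(\mathrm{dist}(\psi,\tfrac{2\pi}n\Z)\bigr)$ with $\tilde g$ nondecreasing, reducing everything to the single inequality $|t+\phi-\tfrac\pi n|+|t-\phi|\le\tfrac\pi n$; this is a genuine streamlining. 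For the nonemptiness statement the approaches diverge more: the paper merely says that since $g$ is nonconstant on $[0,\tfrac\pi n]$, ``the arguments above'' produce a point where $F>0$, whereas you argue by contradiction that equality everywhere forces a reflection symmetry about $z_{\frac\pi n+t}$, hence (composing with the symmetry about $z_{\frac\pi n}$) the extra period $2t$, and then play this strictly smaller period against monotonicity on a fundamental half-sector to force $g$ constant. This rigidity argument is more explicit than the paper's and effectively fills in a step the paper leaves to the reader. Two cosmetic points: the remark that the distance inequality is strict buys you nothing (it does not give $g(\theta)<g(\theta')$, since $\tilde g$ is only nondecreasing, and the inclusion needs only the weak inequality); and the propagation of $g(\cdot+2t)=g(\cdot)$ from an arc of length $\tfrac{2\pi}n$ to all of $\R$ is by $\tfrac{2\pi}n$-periodicity rather than continuity.
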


\begin{proof}

The action of the symmetry $S_{\frac\pi n+t}$ is given in polar coordinates by $S_{\frac\pi n+t}(re^{\iz\theta})=r e^{\iz(2(\frac\pi n+t)-\theta)}$. Hence, 
$$S_{\frac\pi n+t}\left(D\cap\sigma
\left(\frac\pi n+t,\frac{2\pi}n+t\right)\right)=S_{\frac\pi n+t}(D)\cap\sigma\left(t,\frac\pi n+t\right).$$  
Moreover, the domain $D$ being parametrized by a positive even $\frac{2 \pi}n$-periodic function $g(\theta)$, that is $D=\{re^{\iz\theta}; \theta\in [0,2\pi),0\leq r< g(\theta)\},$ its image $S_{\frac\pi n+t}\left(D\right)$ can be parametrized in the same manner by the function $g^*(\theta)=g(\theta - 2t)$. Thus
$$S_{\frac\pi n+t}(D)\cap\sigma\left(t,\frac\pi n+t\right)=\{re^{\iz\theta}; \theta\in \left(t,\frac\pi n+t\right),0\leq r< g(\theta - 2t)\}.$$ 
Therefore, we need to prove that  $F(\theta)=g(\theta)-g^*(\theta)$ is  nonnegative for every $\theta$ in the interval $(t,\frac\pi n+t)$. This will be possible thanks to the assumptions of symmetry (that is $g$ is even and $\frac{2 \pi}n$-periodic) and monotonicity (that is $g$ is nondecreasing on $[0,\frac\pi n]$). Indeed, these properties imply that on the interval $\left(t,\frac\pi n+t\right)$,
\begin{itemize}
	\item $g$ achieves its maximum at $\theta=\frac{\pi}n$,
	\item $g^*$ achieves its minimum at $\theta=2t$.
\end{itemize}
\begin{center}
\includegraphics[angle=0,width=5cm]{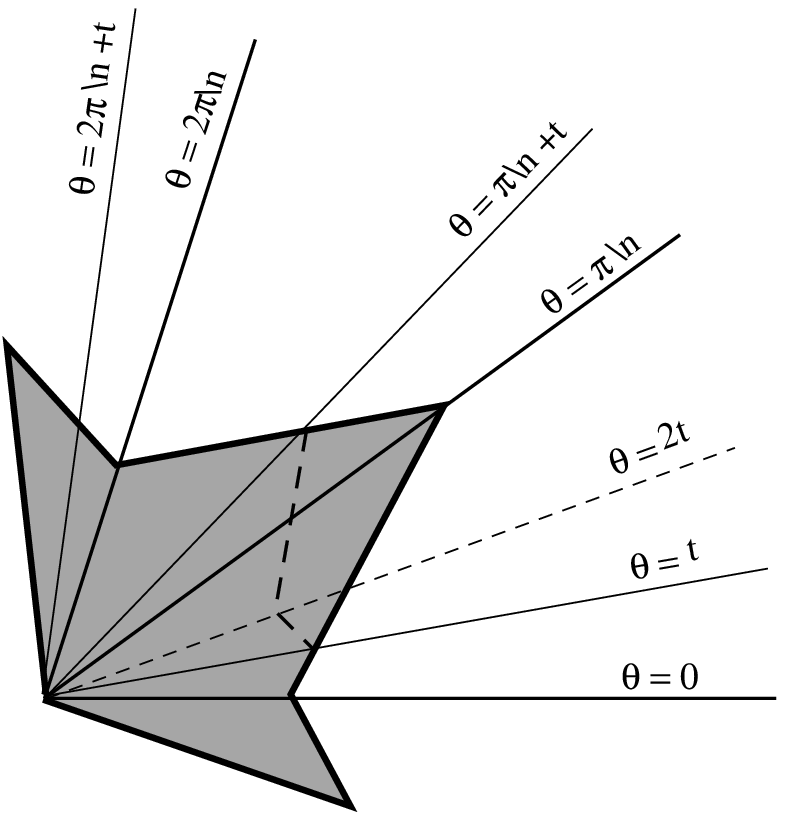}
\hfill
\includegraphics[angle=0,width=5cm]{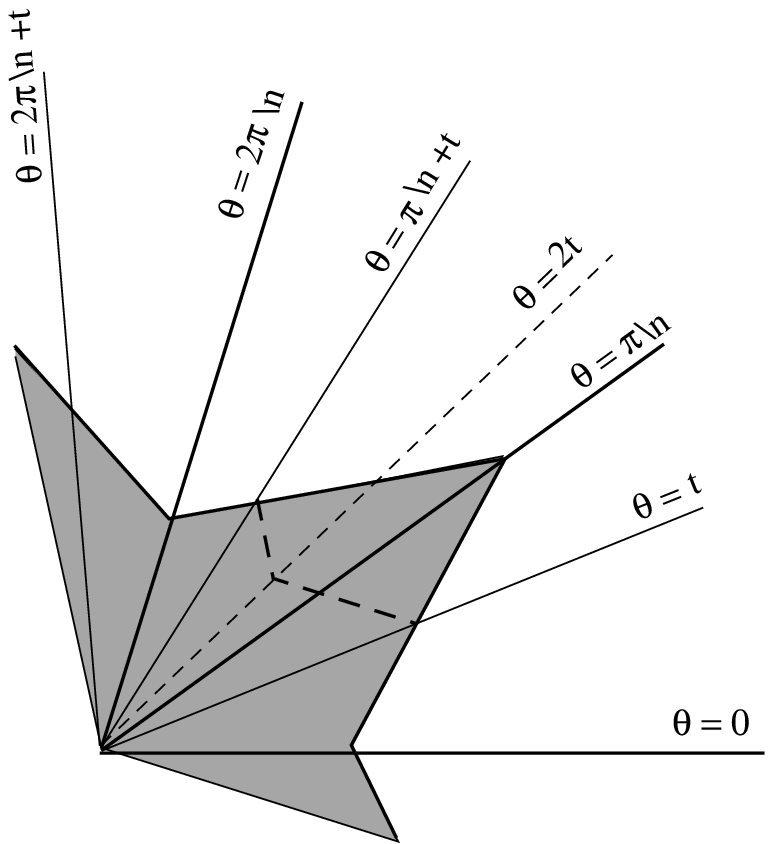}
\end{center}
\begin{center}
 \small case $2t<{\pi\over n}$ \hspace{6cm} case $2t>{\pi\over n}$
\end{center}
Four cases must be considered separately:
\begin{itemize}
\item[-] If $t<\theta\leq \min\{2t,\frac \pi n\}$, we may write, since $g$ is even,
$F(\theta)=g(\theta)-g(2t -\theta)$, with 
$0\leq 2t-\theta<\theta \leq\frac\pi n$. Since $g$ is nondecreasing on $[0,\frac\pi n]$, we get $F(\theta) \ge 0$.
    
\item[-]  If $\max\{2t,\frac \pi n\}\le \theta< \frac\pi n+t$, we may write, since $g$ is even and $\frac{2 \pi}n$-periodic,  
$F(\theta)=g(2\frac\pi n -\theta)-g(\theta-2t)$
with $0\leq \theta-2t< 2\frac\pi n -\theta \leq\frac\pi n$. Hence, $F(\theta) \ge 0$.
    
\item[-] If $2t <\frac \pi n$ and  $2t\le\theta\le\frac \pi n$, then $0 \leq \theta - 2t< \theta\leq\frac\pi n$ and, then, $F(\theta)=g(\theta)-g(\theta - 2t) \ge0$. 

\item[-] If $2t >\frac \pi n$ and $\frac \pi n\le\theta\le 2t$, then $0\leq 2t-\theta< 2\frac\pi n -\theta \leq\frac\pi n$ and, then, $F(\theta)=g(2\frac\pi n -\theta)-g(2t-\theta) \ge0$. 
\end{itemize}
Hence, $F(\theta)$ is nonnegative for all $\theta$ in $(t,\frac\pi n+t)$. 

Now, if $D$ is not a disk, then $g$ is nonconstant on $[0,\frac\pi n]$. Following the arguments above, we deduce that the function $F(\theta)$ is positive somewhere on $(t,\frac\pi n+t)$ which means that $S_{\frac\pi n+t}\left(\partial D\cap\sigma
\left(\frac\pi n+t,\frac{2\pi}n+t\right)\right)$ meets the interior of $D$.
\end{proof}
\begin{proof}[Proof of Theorem \ref{mainth}] Notice first that, since $\lambda$ is an even and $\frac{2\pi}{n}$-periodic function of $t$, one immediately gets, $\forall k\in \Z$, $\lambda(k\frac\pi n-t)=\lambda(k\frac\pi n+t)$ and, then,
$$\lambda'\left(k\frac\pi n\right)=0.$$
Alternatively, one can deduce that $\lambda'\left(k\frac\pi n\right)=0$ from Hadamard's variation formula (\ref{hadamard}) after noticing that the domain $\Omega(k\frac\pi n)$ is symmetric with respect to the $x_1$-axis and that the first Dirichlet eigenfunction $u(k\frac\pi n)$  satisfies
$u\circ S_0=u$, where $S_0$ is the symmetry with respect to the $x_1$-axis.

Let us fix a $t$ in $\left(0,\frac{\pi}n \right) $ and denote by $u$ the nonnegative first Dirichlet eigenfunction of $\Omega (t)$ satisfying $\int_{\Omega(t)} u^2=1$. The domain $\Omega(t)$ is clearly invariant by the rotation $\rho_{\frac{2\pi}n}$ of angle $\frac{2\pi}n$, hence $u\circ \rho_{\frac{2\pi}n}=u$. On the other hand, the domain $B$ being parametrized by a positive even $\frac{2 \pi}n$-periodic function $f(\theta)$, that is $B=\{re^{\iz\theta}; \theta\in [0,2\pi),0\leq r< f(\theta)\},$ one has
$$B_t=\{re^{\iz\theta}; \theta\in [0,2\pi),0\leq r< h(\theta)\},$$
with $h(\theta)=f(\theta-t).$ Hence, the function ${\eta_t}\cdot{v}$ is invariant by $\rho_{\frac{2\pi}n}$ (Lemma \ref{n.v}) and we have (Hadamard formula (\ref{hadamard})) 
$$\lambda'(t)=\int_{\partial B_t}\left|\frac{\partial u}{\partial {\eta_t}}
\right|^2 {\eta_t}\cdot{v} \ d\sigma
= n\int_{\partial B_t\cap\sigma(t,\frac{2\pi}n+t)}
\left|\frac{\partial u}{\partial {\eta_t}}\right|^2 {\eta_t}\cdot{v} \ d\sigma.$$
Since $B_t$ is symmetric with respect to the axis $z_{\frac\pi n+t}$, we have (Lemma \ref{n.v}),
${\eta_t}\cdot{v}(\frac\pi n+t+\theta)=-{\eta_t}\cdot{v}(\frac\pi n+t-\theta)$ or, equivalently, ${\eta_t}\cdot{v}(x)=-{\eta_t}\cdot{v}(x^*)$, where $x^*$ denotes the symmetric of $x$ with respect to $z_{\frac\pi n+t}$. This yields
$$\lambda'(t)= n\int_{\partial B_t\cap\sigma(\frac{\pi}n+t,\frac{2\pi}n+t )}
\left(
\left|\frac{\partial u}{\partial {\eta_t}}(x)\right|^2 
-\left|\frac{\partial u}{\partial {\eta_t}}(x^*)\right|^2
\right)
{\eta_t}\cdot{v} (x)\ d\sigma$$ 
Notice that the function $h(\theta)$ is decreasing between $\frac{\pi}n+t$ and $\frac{2\pi}n+t$ and, then, ${\eta_t}\cdot{v}$ is nonnegative on  $\partial B_t\cap\sigma(\frac{\pi}n+t,\frac{2\pi}n+t )$ (Lemma \ref{n.v}).

Let $H(t):=\Omega(t)\cap\sigma(\frac\pi n+t,\frac{2\pi}n+t)$. Applying Lemma \ref{prosymint}, and since $B_t$ is symmetric with respect to the axis $z_{\frac\pi n+t}$, one gets 
$$S_{\frac\pi n+t}(H(t))\subset\Omega(t)\cap\sigma( t,\frac\pi n+t).$$
Hence, the function $w(x)=u(x)-u(x^*)$ is well defined on $H(t)$ and satisfies
$w(x)=0$ for all $x$ in $\partial H(t)\cap \left(\partial B_t\cup z_{\frac\pi n+t} \cup z_{\frac{2\pi} n+t}\right)$. Moreover, since $u$ vanishes on $\partial D$ and is positive inside $\Omega(t)$,   $w(x)\le0$ for all $x$ in $\partial H(t)\cap \partial D$ and $w(x)<0$ for certain $x$ in $\partial H(t)\cap \partial D$ (recall that $D$ is not a disk and apply the second part of Lemma  \ref{prosymint}).

Therefore, the nonconstant function $w$ satisfies the following:
\begin{displaymath}
\left\{\begin{array}{rcll}
\Delta w&=&-\lambda(t) w &\textrm{in  } H(t)\\
w&\leq&0  &\textrm{on  }\partial H(t).
\end{array}\right.
\end{displaymath}
Hence, $w$ must be nonpositive on the whole of $H(t)$. Otherwise, a nodal domain $V\subset H(t)$ of $w$ would have the same first Dirichlet eigenvalue as $\Omega (t)$. But, due to the invariance of $\Omega (t)$ by $\rho_{\frac{2\pi}n}$, the domain $\Omega (t)$ would contain $n$ copies of $V$ leading to a strong contradiction with the domain monotonicity theorem for eigenvalues.  Therefore, $\Delta w \ge 0$ in $H(t)$ and $w$ achieves its maximal value (i.e. zero) on $\partial B_t\cap\sigma(\frac\pi n+t,\frac{2\pi}n+t)\subset \partial H(t)$. The Hopf maximum principle (see \cite[Theorem 7, ch.2]{PW}) then implies that, at any regular point $x$ of $\partial B_t\cap\sigma(\frac\pi n+t,\frac{2\pi}n+t)$, one has 
$$\frac{\partial w}{\partial {\eta_t}}(x)=\frac{\partial u}{\partial {\eta_t}}(x)-\frac{\partial u}{\partial {\eta_t}}(x^*)<0.$$
It follows that $\lambda'(t)\le 0$ and that the equality holds if and only if ${\eta_t}\cdot{v}\equiv0$.  By Lemma \ref{n.v}, this last equality occurs if and only if $f$ is constant which means that $B$ is a disk.

\end{proof}

\end{document}